 \newtheorem{theorem}{Theorem}[section]
 \newtheorem{lemma}[theorem]{Lemma}
 \newtheorem{proposition}[theorem]{Proposition} \theoremstyle{definition}
 \newtheorem{definition}[theorem]{Definition}
 \theoremstyle{definition}
 \theoremstyle{remark}
 \newtheorem{rem}[theorem]{Remark}
 \numberwithin{equation}{section}
\newcommand{\ben}{\begin{equation}}
\newcommand{\een}{\end{equation}}
\newcounter{commentcounter}
\newcommand{\integer}{\ensuremath{{\mathbb Z}}}
\newcommand{\real}{\ensuremath{{\mathbb R}}}
\newcommand{\complex}{\ensuremath{{\mathbb C}}}
\newcommand{\LL}{\mathcal{L}}
\newcommand{\gr}{\mathfrak}
\begin{document}

\title[Equivariant extensions of differential forms]{Equivariant extensions of differential forms for non-compact Lie groups}

\author[H. Garc\'{\i}a-Compe\'an]{Hugo Garc\'{\i}a-Compe\'an}
\thanks{The third author acknowledges and thanks the financial support of the Alexander Von Humboldt Foundation.}
\dedicatory{In celebration of the fiftieth anniversary of the
Mathematics Department at the CINVESTAV.}
\address{Departamento de F\'{\i}sica, Centro de Investigaci\'on y de Estudios Avanzados,
Av. I.P.N 2508, Zacatenco, Delegaci\'on Gustavo A. Madero, C.P. 07360,
, M\'exico D.F., M\'exico} \email{compean@fis.cinvestav.mx}
\author[P. Paniagua]{Pablo Paniagua}
\address{Departamento de Matem\'aticas, Escuela Superior de F\'{\i}sica y Matem\'aticas del  Instituto Polit\'ecnico Nacional,
Unidad Adolfo L\'opez Mateos, Edificio 9, 07738, M\'exico D.F.,
M\'exico }
 \email{ppaniagua@esfm.ipn.mx}
\author[B. Uribe]{Bernardo Uribe }
\address{ Departamento de Matem\'{a}ticas, Universidad de los Andes,
Carrera 1 N. 18A - 10, Bogot\'a, COLOMBIA}
 \email{buribe@uniandes.edu.co}

 \subjclass[2010]{57R91, 57T10, 81T40, 81T70 }

\keywords{Equivariant cohomology, gauged WZW action, equivariant
extension}
\begin{abstract}
Consider a manifold endowed with the action of a Lie group. We study
the relation between the cohomology of the Cartan complex and the
equivariant cohomology by using the equivariant De Rham complex
developed by Getzler, and we show that the cohomology of the Cartan
complex lies on the $0-th$ row of the second page of a spectral
sequence converging to the equivariant cohomology. We use this
result to generalize a result of Witten on the equivalence of
absence of anomalies in gauged WZW actions on compact Lie groups to
the existence of equivariant extension of the WZW term, to the case
on which the gauge group is the special linear group with real
coefficients.
\end{abstract}

\maketitle

\section{Introduction}

In certain situations, geometrical information of manifolds might be
encoded in differential forms. In the presence of symmetries of the
manifold via the action of a Lie group, the behavior of these
differential forms under the group action may lead to a better
understanding of the manifold itself. In some well known instances
of actions of compact Lie groups, the action is of a particular type
whenever the differential form may be extended to an equivariant one
in the Cartan model of equivariant cohomology \cite{Cartan}; this is
the case for example in Hamiltonian actions on symplectic manifolds
\cite{AtiyahBott}, in Hamiltonian actions on exact Courant
algebroids \cite{Bursztyn, CaviedesHuUribe, Hu2, Uribe-dg} or in
gauged WZW actions which are anomaly free \cite{Witten} and its
generalizations \cite{GarciaCompeanPaniagua, Paniagua}.

When the Lie group is not compact, the Cartan complex associated to
the action in general does not compute the equivariant cohomology of
the manifold, and therefore many of the results that hold for
compact Lie groups may not hold for the non-compact case. But since
the Cartan model is very well suited for studying the infinitesimal
behavior of the differential forms with respect to the action of the
Lie algebra, it would be worthwhile knowing more about the relation
between the cohomology of the Cartan complex and the equivariant
cohomology. In this paper we study this relation and we obtain some
interesting results which in particular permit us generalize the
conditions for cancellation of anomalies on gauge WZW actions
developed by Witten \cite{Witten}, to the case on which the gauge
group is the non-compact group $SL(n,\real)$.

The main ingredient of this work is the equivariant De Rham complex
developed by Getzler \cite{Getzler} whose cohomology calculates the
equivariant cohomology independent whether the group is compact or
not. We show that there is an inclusion of complexes of the Cartan
complex into the equivariant De Rham one, and therefore we obtain a
homomorphism between the cohomology of the Cartan complex to the
equivariant De Rham cohomology. This in particular implies that any
closed form in the Cartan complex defines an equivariant cohomology
class, but note that the converse may not be true. We furthermore
show that there is a spectral sequence converging to the equivariant
De Rham cohomology such that its $E_2^{*,0}$-term is isomorphic to
the cohomology of the Cartan complex, and in this way an explicit
relation between the cohomology of the Cartan complex and the
equivariant De Rham cohomology is obtained. Now, since the first
page of this spectral sequence could be understood in terms of the
differential cohomology of a Lie group, we recall its definition and
some of its properties and we reconstruct some calculations for the
group $SL(n,\real)$.

We conclude this work by applying our results to the gauged WZW
actions whenever the gauge group is $SL(n,\real)$. We recall the
result of Witten which claims that the gauged WZW action on compact
Lie groups is anomaly free, if and only if the WZW term $\omega =
\frac{1}{12 \pi} {\rm{Tr}}(g^{-1} dg)^3$ can be extended to a closed
form in the Cartan complex, and we generalize it to the case on
which the gauge group is $SL(n,\real)$; this is Theorem
\ref{equivalence of extension for SL(n,R)}. We use this theorem to
construct explicit examples where the equations representing the
condition of anomalies cancelation hold and where they do not. The
physical implications of our work will appear elsewhere.

\vspace{0.5cm}

\noindent {\bf Acknowledgments} The first author was partly supported 
 by the CONACyT-M\'exico through grant number 128761. The third author would
like to acknowledge and to thank the financial support provided by
the Alexander Von Humboldt Foundation and also would like to
 thank the hospitality of Prof. Wolfgang L\"uck at the Mathematics Institute of the
University of Bonn.

\section{Equivariant cohomology}

Let $G$ be a Lie group and $M$ a manifold on which $G$ acts on the
left by diffeomorphisms . The $G$-equivariant cohomology of $M$
could be defined as the singular cohomology of the homotopy quotient
$EG \times_G M$
$$H^*_G(M; \integer) : = H^*(EG \times_G M; \integer),$$
where $EG$ is the universal $G$-principal bundle $G \to EG \to
BG$.

The previous definition works for any topological group and any
continuous action, but sometimes it is convenient to have a De Rham
version with differentiable forms for the equivariant cohomology
whenever the group is of Lie type and the action is
differentiable.When the group $G$ is compact, the Weil and Cartan
models provide a framework on which the equivariant cohomology could
be obtained via a complex whose ingredients are the local action of
the Lie algebra $\gr{g}$, and the differentiable forms
$\Omega^\bullet M$. When the group $G$ is not compact there is a
more elaborate model for equivariant cohomology that we will
describe in the next section.

\subsection{De Rham model of Equivariant cohomology} \label{equivariant De Rham complex}
One way to obtain a De Rham model for equivariant cohomology is through the total complex
of the double complex
$$\Omega^*\left(N_\bullet ( G \ltimes M)\right)$$
that is obtained after applying the differentiable forms functor to
the simplicial space $N_\bullet ( G \ltimes M)$ which is the nerve
of the differentiable groupoid $G \ltimes M$.

By the works of Bott-Shulman-Stasheff \cite{BottShulmanStasheff} and
Getzler \cite{Getzler} we know that one way to calculate the
cohomology of the total complex of the double complex of differentiable
forms
 $$\Omega^*\left(N_\bullet ( G \ltimes M)\right)$$
 is through the differentiable cohomology groups
 $$H^*(G,S{\gr g}^* \otimes \Omega^\bullet M  )$$
of the group $G$ with values on the differentiable forms of $M$
tensor the symmetric algebra of the dual of the Lie algebra
$\gr{g}$.

In \cite{Getzler} Getzler has shown that there is a De Rham theorem
for equivariant cohomology showing that there is an isomorphism of
rings
$$H^*(G, S{\gr g}^* \otimes \Omega^\bullet M ) \cong H^*(EG \times_G M ; \real)$$
between the De Rham model for equivariant cohomology and the
cohomology of the homotopy quotient.

The De Rham model for equivariant cohomology defined by Getzler is
described as follows.

Consider the complex $C^k(G,  S{\gr g}^* \otimes \Omega^\bullet M
)$ with elements smooth maps
$$f(g_1, \dots , g_k | X) : G^k \times \gr{g} \to \Omega^\bullet M, $$
which vanish if any of the arguments $g_i$ equals the identity of
$G$. The operators $d$ and $\iota$ are defined by the formulas
\begin{eqnarray*}
 (df)(g_1, \dots , g_k | X) &=& (-1)^k df(g_1, \dots , g_k | X) \ \ \ \ \ \ {\rm{and}}\\
(\iota f) (g_1, \dots , g_k | X) &=& (-1)^k \iota(X) f(g_1, \dots
, g_k | X),
\end{eqnarray*} as in the case of the differential in Cartan's
model for equivariant cohomology \cite{Cartan, Guillemin}. Recall
that the elements in $\gr{g}^*$ are defined to have degree 2, and
therefore the operator $\iota$ has degree 1. Denote the generators of the symmetric algebra
$S\gr{g}^*$ by $\Omega^a$ where $a$ runs over a base of $\gr{g}$.

The coboundary $\bar{d}: C^k \to C^{k+1}$ is defined by the formula
\begin{eqnarray*}
(\bar{d}f)(g_0, \dots , g_k|X) & = & f( g_1, \dots , g_k | X ) +
 \sum_{i=1}^k (-1)^i f(g_0, \dots, g_{i-1}g_i, \dots  , g_k | X)\\
 & & +(-1)^{k+1} g_k f(g_0, \dots , g_{k-1} | {\rm{Ad}}(g_k^{-1})X),
\end{eqnarray*}
and the contraction $\bar{\iota} : C^k \to C^{k-1}$ is defined by
the formula
\begin{eqnarray*}
(\bar{\iota}f)(g_1, \dots , g_{k-1}|X) & = & \sum_{i=0}^{k-1} (-1)^i
\frac{\partial}{\partial t}  f(g_1, \dots, g_i, e^{tX_i}, g_{i+1}
\dots  , g_{k-1} | X),
\end{eqnarray*}
where $X_i= {\rm{Ad}}(g_{i+1} \dots g_{k-1})X$.

If the image of the map
$$ f: G^k \to  S \gr g^* \otimes \Omega^\bullet M $$
is a homogeneous polynomial of degree $l$, then the total degree
of the map $f$ equals $deg(f)=k+l$. It follows that the structural
maps $d, \iota, \bar{d}$ and $\bar{\iota}$ are degree 1 maps, and
the operator $$d_G = d + \iota +\bar{d} + \bar{\iota}$$ becomes a
degree 1 map that squares to zero.

\begin{definition}
The elements of the complex
$$\left( C^*(G,
S{\gr g}^* \otimes \Omega^\bullet M ) , d_G \right)$$ will be called {\it equivariant De Rham forms} and
its cohomology $$H^*(G, S{\gr g}^* \otimes \Omega^\bullet M )$$
will be called the equivariant De Rham  cohomology.
\end{definition}

In \cite{Getzler} it was shown that the complex $\left( C^*(G,
S{\gr g}^* \otimes \Omega^\bullet M ) , d_G \right)$ together with
the cup product
\begin{eqnarray*}
(a \cup b)(g_1, ... , g_{k+l}|X) = (-1)^{l(|a|-k)} \gamma
a(g_1,..., g_k| {\rm{Ad}}(\gamma^{ -1})X) b(g_{k+1},...,
g_{k+l}|X)
\end{eqnarray*}
for $\gamma = g_{k+1} ...g_{k+l}$, becomes a differential graded
algebra, and moreover that there is a canonical isomorphism of
rings
$$H^*(G, S{\gr g}^* \otimes \Omega^\bullet M ) \cong H^*(M \times_G
EG ; \real)$$ with the cohomology of the homotopy quotient.

\subsubsection{Cartan model for equivariant cohomology} The Cartan model for equivariant cohomology is the differential graded algebra
$$C^*_G(M):=  ( S{\gr g}^* \otimes \Omega^\bullet M)^G$$
endowed with the differential $d+\iota$. Therefore there is a natural homomorphism of
differential graded algebras \begin{eqnarray} \label{homomorphism Cartan to De Rham}
i: (C^*_G(M), d+\iota) \to \left( C^*(G, S{\gr g}^* \otimes \Omega^\bullet M
) , d_G \right)\end{eqnarray} given by the inclusion
$$( S{\gr g}^* \otimes \Omega^\bullet M)^G \subset C^0(G,
S{\gr g}^* \otimes \Omega^\bullet M )$$ since the restriction of $d_G$ to
$( S{\gr g}^* \otimes \Omega^\bullet M)^G$ is precisely $d + \iota$ because  the
operators $\bar{d}$ and $\bar{\iota}$ act trivially on the
invariant elements of $C^0(G, S{\gr g}^* \otimes \Omega^\bullet
M)$.

The induced map on cohomologies
$$i : H^*(C^*_G(M), d+\iota) \to H^*(G, S{\gr g}^* \otimes \Omega^\bullet M
)$$ is far from being an isomorphism as the case of $M=pt$  and $G=GL(1,\real)_+=\real_+^*$ shows.\\
 In this case $C_G(M)= S ( \gr{gl}(1,\real))= \real[x]$ and $d+ \iota=0$, hence $H^*(C_G(M))=
\real[x]$ where $|x|=2$. On the other hand  $H^*(G,S{\gr g}^*)= H^*(BG,\real)=\real$ since $BG$ is contractible.

Nevertheless, when the Lie group $G$ is compact,
the map $i$ induces an isomorphism in cohomology \cite{Getzler}. Now, in order to understand in more detail the relation between the cohomology of the Cartan model and the equivariant cohomology we will introduce a spectral sequence suited for this purpose.

\subsubsection{A spectral sequence for the equivariant De Rham complex}
\label{subsubsection spectral sequence}

Let us filter the complex $C^*(G, S{\gr g}^* \otimes
\Omega^\bullet M )$ by the degree in $S{\gr g}^* \otimes
\Omega^\bullet M$; namely, if we consider maps
$$f: G^k \to S{\gr g}^* \otimes
\Omega^\bullet M$$ with image homogeneous elements of degree $l$,
we will denote $deg_1(f)=k$ and $deg_2(f) = l$. Then we can define
the filtration
$$F^p:= \{ f \in C^*(G, S{\gr g}^* \otimes
\Omega^\bullet M | deg_2(f)\geq p \}$$ where  $F^{p+1} \subset
F^p$. We have that the differentials have the following degrees:
\begin{align*}
deg_1(d)& =0 &  deg_2(d)& =1\\
deg_1(\iota)&=0  & deg_2(\iota)& =1\\
deg_1(\overline{d})&=1 & deg_2(\overline{d})&=0\\
deg_1(\overline{\iota})&=-1& deg_2(\overline{\iota})&=2\end{align*}
and therefore the filtration is compatible with the differentials.

The spectral sequence associated to the filtration $F^*$ has for
page 0:
$$E_0 = \bigoplus_p F^p/F^{p+1} \cong C^*(G, S{\gr g}^* \otimes
\Omega^\bullet M )$$ and the 0-th differential is $d_0 =
\overline{d}$ because the other three differentials raise $deg_2$.
Therefore the page 1 is:
$$E_1 = H^*(C^*(G, S{\gr g}^* \otimes
\Omega^\bullet M ), \overline{d})$$ the differentiable cohomology of
$G$ with coefficients in the representation $S{\gr g}^* \otimes
\Omega^\bullet M$. The 0-th row of the first page is precisely the Cartan complex
$$E_1^{*,0} = H^0(C^*(G, S{\gr g}^* \otimes
\Omega^\bullet M ), \overline{d}) = (S{\gr g}^* \otimes
\Omega^\bullet M)^G =C_G(M)$$
and the first differential on this row  $d_1 : E_1^{*,0} \to E_1^{*+1,0}$ becomes precisely the Cartan differential $d + \iota$. Therefore we see that on the second page we get that
$$E_2^{*,0} \cong H^*(C_G(M), d+ \iota),$$
namely that the 0-th row of the second page is isomorphic to the cohomology of the Cartan complex.

Therefore we conclude that the composition
$$E_2^{*,0} \to H^*(G, S{\gr g}^* \otimes \Omega^\bullet M)$$
of the surjective homomorphism $E_2^{*,0}  \to E_\infty^{*,0} $ with the inclusion
$$E_\infty^{*,0} \subset H^*(G, S{\gr g}^* \otimes \Omega^\bullet M)$$
is equivalent to the induced map on cohomologies
$$i : H^*(C^*_G(M), d+\iota) \to H^*(G, S{\gr g}^* \otimes \Omega^\bullet M
)$$
defined previously.

\subsection{Differentiable cohomology of Lie groups}

Notice that in De Rham model for equivariant cohomology the operator $\overline{d}$ is defined in a similar fashion
as the differential for group cohomology in the case of discrete groups. The cohomology groups defined by the differential $\overline{d}$
are called the {\it differentiable cohomology groups} and are defined for any $G$-module $V$; i.e. if $G$ is a Lie group and $V$ is a $G$-module then the {\it differentiable cohomology of $G$ with values
in $V$} is the cohomology of the complex $C_d^*(G,V)$ where $C_d^k(G,V)$ consists of differentiable maps $f: G^k \to V$ such
that $f$  vanishes if any of the arguments $g_i$ equals the identity of
$G$, and the differential is $\overline{d}$ is defined by
\begin{eqnarray*}
(\bar{d}f)(g_0, \dots , g_k) & = & f( g_1, \dots , g_k ) +
 \sum_{i=1}^k (-1)^i f(g_0, \dots, g_{i-1}g_i, \dots  , g_k )\\
 & & +(-1)^{k+1} g_k f(g_0, \dots , g_{k-1} ).
\end{eqnarray*}
We denote this cohomology by
$$H_d^*(G,V) := H^*( C^*(G,V), \overline{d}).$$

For $V$ in the category of topological $G$-modules, the cohomology groups $H_d^*(G,V)$ can be seen as the relative derived functor associated
to the $G$ invariant submodule $V^G$. In particular we have that
$$H_d^0(G,V)=V^G,$$
and whenever $G$ is compact the functor of taking the $G$-invariant submodule is exact and therefore
in that case $H_d^{*>0}(G,V)=0$.

\begin{rem}
The first page of the spectral sequence that was defined in section \S \ref{subsubsection spectral sequence}
$$E_1 = H^*(C^*(G, S{\gr g}^* \otimes
\Omega^\bullet M ), \overline{d})$$
is precisely the differential cohomology defined above
$$E_1= H_d^*(G, S{\gr g}^* \otimes
\Omega^\bullet M )$$
for the $G$-module $S{\gr g}^* \otimes
\Omega^\bullet M$.
\end{rem}

For $V$ a vector space over $\real$, by differentiating the functions from $G^k$ to $V$, Van Est \cite{VanEst1} proved that for $G$ connected
$$H_d^*(G;V) \cong H^*(\gr{g},\gr{k};V)$$
whenever $K$ is the maximal compact subgroup of $G$, $\gr{k}$ and $\gr{g}$ are their corresponding Lie algebras, and
$H^*(\gr{g},\gr{k};V)$ denotes the Lie algebra cohomology defined by Chevalley and Eilenberg in \cite{ChevalleyEilenberg}.

Whenever the group $G$ has a compact form $G_u$, i.e. a compact Lie group whose complexification is isomorphic to the complexification
of $G$
$$G_\complex \cong (G_u)_\complex,$$
then
we have that
$$H^*(\gr{g},\gr{k}; V) \cong H^*(\gr{g}_u,\gr{k}; V) \cong H^*(G_u/K;V),$$
where the second isomorphism was proved by Chevalley and Eilenberg in \cite{ChevalleyEilenberg} for compact Lie groups,
and the first isomorphism follows from the isomorphisms
$$H^*(\gr{g},\gr{k}; V) \otimes_\real \complex \cong H^*(\gr{g}_\complex,\gr{k}_\complex; V_\complex) \cong
H^*(\gr{g}_u,\gr{k}; V) \otimes_\real \complex ,$$ which follow from
the isomorphism of complex Lie algebras $\gr{g}_\complex \cong
(\gr{g}_u)_\complex$. In this case we have that the differentiable
cohomology of $G$ can be calculated by topological methods, i.e.
$$H_d^*(G;V) \cong H^*(G_u/K; V).$$

\subsubsection{Example $G=SL(n,\real)$} \label{section differentiable cohomology of SL(n,R)} Let us
consider the non-compact group $G=SL(n,\real)$.
In this case we have \begin{align*}
\gr{g}&=\gr{sl}(n,\real) &
\gr{g}_\complex&= \gr{sl}(n,\complex)\\
\gr{g}_u&= \gr{su}(n) &
\gr{k}&= \gr{so}(n)\\
\gr{k}_\complex&=\gr{so}(n,\complex) &
K&=SO(n)\\
G_u&=SU(n). & &\end{align*}
Therefore for $n>2$ we have that
$$H_d^*(SL(n,\real),\real) = H^*(SU(n)/SO(n); \real) =\Lambda[h_3,h_5,...,h_{\langle n \rangle}],$$
where the degree of $h_i$ is $2i-1$ and $\langle n \rangle$ is the largest odd integer which is less or equal than $n$.

The equivariant De Rham cohomology of this group is $H^*(G,S\gr{g}^*)$. The spectral sequence defined in section
\S \ref{subsubsection spectral sequence} has  for first page
$$E_1 = H_d^*(G, S\gr{g}^*) \cong H^*(\gr{g},\gr{k}; S\gr{g}^*)$$
and since the algebra $\gr{g}=\gr{sl}(n,\real)$ is reductive and $S\gr{g}^*$ is a finite dimensional semi-simple $\gr{g}$-module in each degree \cite{Berger-et-al}, then we have that
$$H^*(\gr{g},\gr{k}; S\gr{g}^*) \cong H^*(\gr{g},\gr{k}; \real) \otimes (S\gr{g}^*)^{G}$$
and therefore
$$E_1 \cong H_d(G;\real) \otimes (S\gr{g}^*)^G.$$

The ideal of $G$-invariant polynomials  is known to be
$$(S\gr{g}^*)^G \cong \real[c_2,c_3,...,c_n],$$
where the degree of $c_i$ is $2i$, and so we get that the first page of the spectral sequence
converging to $H^*(G,S \gr{g}^*)$ is
$$E_1 \cong \Lambda[h_3,h_5,...,h_{\langle n \rangle}] \otimes \real[c_2,c_3,...,c_n].$$

Since we know that  for $n>2$
$$H^*(G,S\gr{g}^*)= H^*(BG;\real) = H^*(BSO(n);\real) \cong \real[c_2,c_4,...c_{2[n/2]}] $$
is the free algebra on the Pontrjagin classes, we obtain that the
$(2i)$-th differential of the spectral sequence maps the class $h_i$
to the class $c_i$ $$h_i \stackrel{d_{2i}}{\mapsto} c_i.$$

In particular we obtain that
\begin{equation} \label{fourth cohomology of BSL(n,R)}
\real=H^4(G,S\gr{g}^*) \cong E_\infty^{4,0} \cong E_1^{4,0} = (S^2 \gr{g}^*)^G,\end{equation}
and therefore we see that the fourth cohomology group of $BG$ is generated by
the $G$-invariant quadratic forms in  $(S^2 \gr{g}^*)^G$.

\section{Equivariant extension of differential forms} \label{section extension differential form}

In many instances in geometry, the action of a compact Lie group on a manifold being of a certain kind is equivalent to the existence of an equivariant lift
 of a specific invariant closed differential form
on the Cartan model. Some examples of this phenomenon are the following:
\begin{itemize}
\item The action of $G$ on a symplectic manifold $(M,\omega)$ being Hamiltonian is equivalent to the existence of
a closed equivariant lift $\tilde{\omega}= \omega + \mu_a\Omega^a \in C_G^2(M)$ of the symplectic form; in this case the maps $\mu_a: M \to \real$
may be assembled into a map $\mu: \gr{g} \to C^\infty M$, $\mu(a):=\mu_a$, which becomes the moment map (see \cite{AtiyahBott}).
\item The action of $G$ on an exact Courant algebroid $(TM \oplus T^*M, [,]_H)$, with Courant-Dorfman bracket twisted by the closed three form $H$, is Hamiltonian provided there exists a closed equivariant lift $\tilde{H}= \omega + \xi_a\Omega^a \in C_G^3(M)$ (see \cite{Bursztyn, CaviedesHuUribe, Hu2, Uribe-dg}).
\item Let $\Gamma$ be a connected, simple, simply connected and compact matrix group, i.e.  $\Gamma \subset GL(N,\real)$, and denote by $\Gamma_L \times \Gamma_R$ the product
of two copies of $\Gamma$ acting on $\Gamma$ on the left by the action
\begin{align*}
(\Gamma_L \times \Gamma_R) \times \Gamma & \to \Gamma \\
((g,h),k) & \mapsto gkh^{-1}.
\end{align*}
A subgroup $G \subset \Gamma_L \times \Gamma_R$ is called an {\it
anomaly-free subgroup} if there the WZW action could be gauged with
respect to the group action given by $G$. In \cite{Witten} Witten
showed that the anomaly-free subgroups are precisely the subgroups
$G$ on which the WZW term
$$\omega = \frac{1}{12 \pi} {\rm{Tr}}(g^{-1} dg)^3$$
could be lifted to a closed equivariant 3-form in the Cartan complex $\tilde{\omega}= \omega - \lambda_a \Omega^a \in C^3_G(\Gamma)$.
(We will elaborate this construction in the next chapter).
\end{itemize}

The previous examples are not exhaustive, but they give the idea of
the general principle. In these cases we have a $G$-invariant closed
form $\omega$ and we need to find a closed equivariant lift
$\tilde{\omega}$. Note that since the group $G$ is compact, the
existence of a closed equivariant lift in the Cartan model is
equivalent to the existence of a lift of the cohomology class
$[\omega] \in H^*(M; \real)$ on the equivariant cohomology $H^*(EG
\times_G M; \real)$. Therefore the obstructions of the existence of
the equivariant lift could be studied via several methods, for
instance, with the use of the Serre spectral sequence associated to
the fibration $M \to EG \times_G M \to BG$, or with the use of the
spectral sequence associated to a filtration of the Cartan complex
$C_G(M)$ given by the degree of $S \gr{g}^*$ as it is done in the papers
\cite{Figueroa, Wu}.

But what happens in the case that the Lie group $G$ is not compact?
We speculate that the situation should be similar, in the sense that
the action being of certain kind is equivalent to the existence of a
lift on the equivariant De Rham complex defined in section
\S\ref{equivariant De Rham complex}. This situation has not been
explored so far but we believe that the equivariant De Rham complex
provides a framework in which actions on non-compact Lie groups
could be better understood.

Since in many instances the geometric information is captured by
closed forms in the Cartan complex, we would like to study the
relation between the cohomology of the Cartan complex and the
equivariant De Rham cohomology whenever the group is not compact.

\subsection{Cartan complex vs. equivariant De Rham complex}

Let us consider the diagram of complexes
$$\xymatrix{
C^*_G(M) \ar[d] \ar[r]^(.35){i} & C^*(G, S{\gr g}^* \otimes \Omega^\bullet M
) \ar[ld] \\
\Omega^\bullet M & }$$
where the horizontal  map is the injective map of complexes defined in \eqref{homomorphism Cartan to De Rham}
and the vertical maps are the natural forgetful maps. Let us take a closed $G$-invariant form $H$ on $M$ and let us suppose that
we can lift this closed form to a closed form in the Cartan model $\overline{H} \in C_G(M)$, then equivariant form $i\overline{H}$
becomes a closed lift for $H$ in the complex of equivariant De Rham forms, i.e. $i\overline{H} \in
 C^*(G, S{\gr g}^* \otimes \Omega^\bullet M)$. We have then

 \begin{lemma} \label{extension in Cartan implies equivariant extension}
 Take a closed $G$-invariant form $H \in (\Omega^\bullet M)^G$. If $H$ can be lifted to a closed form $\overline{H}$ in the Cartan model of the equivariant
 cohomology, then the form $i(\overline{H})$ is a closed lift for $H$ in $ C^*(G, S{\gr g}^* \otimes \Omega^\bullet M)$, the closed forms of the
 equivariant De Rahm complex.
 \end{lemma}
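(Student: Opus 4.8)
The plan is to obtain the statement as a formal consequence of the two structural facts already in place above: that $i$ is a morphism of complexes, and that the forgetful triangle commutes. There is essentially no computation to carry out; the whole content is bookkeeping with the bigrading by group level $k$ and polynomial degree $l$.

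First I would verify that $i(\overline{H})$ is $d_G$-closed. The element $\overline{H}$ lies in the invariant subspace $(S{\gr g}^* \otimes \Omega^\bullet M)^G$, and $i$ places it, unchanged, into the zeroth group level $C^0(G, S{\gr g}^* \otimes \Omega^\bullet M)$. On this subspace the operators $\overline{d}$ and $\overline{\iota}$ vanish: $\overline{\iota}$ maps $C^0$ into $C^{-1}=0$, and for $\overline{d}$ one sets $k=0$ in its defining formula to get $(\overline{d}\,\overline{H})(g_0|X) = \overline{H}(|X) - g_0\cdot \overline{H}(|\Ad(g_0^{-1})X)$, which is exactly the difference measuring the failure of $G$-invariance of $\overline{H}$ and hence is zero. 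Consequently $d_G$ restricts to the Cartan differential $d+\iota$ on the image of $i$, so that $i$ intertwines $d+\iota$ with $d_G$, and therefore $d_G\big(i(\overline{H})\big) = i\big((d+\iota)\overline{H}\big) = i(0) = 0$.

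Second I would check that $i(\overline{H})$ is genuinely a lift of $H$. The forgetful map $C^*(G, S{\gr g}^* \otimes \Omega^\bullet M) \to \Omega^\bullet M$ records the component sitting in group level $k=0$ and polynomial degree zero. Since $i$ does not touch the polynomial part and deposits $\overline{H}$ in $C^0$, this component of $i(\overline{H})$ agrees with the polynomial-degree-zero component of $\overline{H}$, i.e. with the image of $\overline{H}$ under the Cartan forgetful map; this is precisely the commutativity of the triangle drawn above. By hypothesis that image is $H$, so $i(\overline{H})$ restricts to $H$ and is thus a closed lift of $H$ in the equivariant De Rham complex.

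The nearest thing to an obstacle is the vanishing of $\overline{d}$ on invariant elements of $C^0$, and even this is immediate once the $k=0$ instance of the $\overline{d}$ formula is matched against the defining relation $g\cdot \overline{H}(|\Ad(g^{-1})X) = \overline{H}(|X)$ for membership in $(S{\gr g}^*\otimes\Omega^\bullet M)^G$. Everything else is transported formally: closedness travels along the chain map $i$, and the lift property travels along the commuting forgetful maps.
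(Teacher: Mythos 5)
Your proposal is correct and follows essentially the same route as the paper: the paper justifies the lemma by the observation (made when defining the homomorphism $i$ in \eqref{homomorphism Cartan to De Rham}) that $\overline{d}$ and $\overline{\iota}$ act trivially on invariant elements of $C^0(G, S{\gr g}^* \otimes \Omega^\bullet M)$, so $d_G$ restricts to $d+\iota$ there, and the lift property is read off from the commuting forgetful maps. Your explicit verification of the $k=0$ case of the $\overline{d}$ formula against the invariance condition is exactly the point the paper leaves implicit.
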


Note in particular that Lemma \ref{extension in Cartan implies equivariant extension} implies that if one can extend an invariant closed form
to a closed form in the Cartan model for equivariant cohomology, then the cohomology class $[H]$ lies in the image of the canonical
forgetful homomorphism
$$H^*(G, S{\gr g}^* \otimes \Omega^\bullet M) \to H^*(M),$$
and therefore the cohomology class $[H]$ could be extended to an equivariant cohomology class in any model for the equivariant cohomology of $M$.

The converse of Lemma  \ref{extension in Cartan implies equivariant extension}  would say
that if one knows that an invariant differentiable form $H$ could be lifted to a closed form in $ C^*(G, S{\gr g}^* \otimes \Omega^\bullet M)$
then a lift could be written as a closed element in the Cartan model. The converse of Lemma  \ref{extension in Cartan implies equivariant extension}  is indeed true whenever the Lie group $G$ is compact, but
for general group actions it does not hold.

From the spectral sequence defined in section \S \ref{subsubsection spectral sequence} we have seen that in the first page we have
$$E_1^{*,0} \cong (S \gr{g}^* \otimes \Omega ^\bullet M)^G = C_G(M)$$
with differential $d_1 : E_1^{*,0} \to E_1^{*+1,0}$ equivalent to $d + \iota$. Therefore on the second page we get that
\[E_2^{*,0} \cong H^*(C_G(M), d+ \iota),\]
namely that the 0-th row of the second page is isomorphic to the cohomology of the Cartan complex.

 Since we have the surjective homomorphism $E_2^{*,0}  \to E_\infty^{*,0} $ we can conclude that

 \begin{proposition} \label{conditions extension in Cartan}
 For a closed $G$-invariant form $H \in (\Omega^\bullet M)^G$, it can be lifted to a closed form in the Cartan complex if,
 firstly the cohomology class $[H]$ could be lifted to an equivariant cohomology class in $H^*(G, S{\gr g}^* \otimes \Omega^\bullet M)$, and secondly, if the lift lies on the subgroup $E_\infty^{*,0} \subset H^*(G, S{\gr g}^* \otimes \Omega^\bullet M)$.
 \end{proposition}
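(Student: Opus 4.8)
The plan is to run the spectral sequence of \S\ref{subsubsection spectral sequence} in reverse, exploiting the fact recorded there that the composite $E_2^{*,0}\to E_\infty^{*,0}\subset H^*(G,S\gr{g}^*\otimes\Omega^\bullet M)$ coincides with the map $i$ induced by the inclusion of the Cartan complex. Let $N$ be the total degree of $H$, and write $r\colon H^*(G,S\gr{g}^*\otimes\Omega^\bullet M)\to H^*(M)$ for the forgetful homomorphism. The first hypothesis provides a class $\alpha\in H^N(G,S\gr{g}^*\otimes\Omega^\bullet M)$ with $r(\alpha)=[H]$, and the second hypothesis places $\alpha$ in the subgroup $E_\infty^{N,0}$.

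First I would extract a Cartan representative directly from membership in $E_\infty^{N,0}$. Since the total degree is $deg_1+deg_2$ and the filtration is by $deg_2$, the subgroup $E_\infty^{N,0}\subset H^N$ consists of those classes representable by a $d_G$-cocycle of filtration $deg_2\geq N$; as the total degree is $N$, this forces $deg_1=0$, so $\alpha$ has a representative $\omega$ concentrated in the $k=0$ part $C^0(G,S\gr{g}^*\otimes\Omega^\bullet M)=S\gr{g}^*\otimes\Omega^\bullet M$. For such a cochain $\overline{\iota}\,\omega=0$, while $(d+\iota)\omega$ stays in $C^0$ and $\overline{d}\,\omega$ lands in $C^1$; hence $d_G\omega=0$ splits into $(d+\iota)\omega=0$ together with $\overline{d}\,\omega=0$. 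The equation $\overline{d}\,\omega=0$ is exactly the statement that $\omega$ is $G$-invariant, i.e. $\omega\in C_G(M)$, and the remaining equation says $\omega$ is closed in the Cartan complex. Thus $\alpha$ is already represented by a closed Cartan form $\omega$, whose Cartan class $[\omega]\in E_2^{N,0}\cong H^N(C_G(M),d+\iota)$ is the sought preimage of $\alpha$ under the surjection $E_2^{*,0}\to E_\infty^{*,0}$.

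Next I would identify the form part of $\omega$ with $H$. By the compatibility quoted above, $i[\omega]=\alpha$ in $H^N(G,S\gr{g}^*\otimes\Omega^\bullet M)$, and composing with $r$ gives, on one side, the class of the polynomial-degree-zero component (the form part) of $\omega$, and on the other side $r(\alpha)=[H]$. Therefore the form part of $\omega$ is a closed form cohomologous to $H$, which exhibits $[H]$ in the image of the forgetful map $H^*(C_G(M),d+\iota)\to H^*(M)$ and hence realizes the desired lift at the level of cohomology classes.

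The step I expect to be the main obstacle is upgrading ``cohomologous to $H$'' to ``equal to $H$'', so that $\omega$ is a lift of the given form in the strict sense of Lemma \ref{extension in Cartan implies equivariant extension}. Writing $H_0$ for the form part of $\omega$ and choosing $\eta$ with $H-H_0=d\eta$, the correction $\omega-(d+\iota)\eta$ has form part $H_0+d\eta=H$ and remains closed, but it lies in $C_G(M)$ only if $\eta$ can be taken $G$-invariant. For compact $G$ such an invariant primitive is produced by averaging over the group, recovering the classical equivalence; for non-compact $G$ an invariant exact form need not admit an invariant primitive, so at this level of generality the natural reading of the conclusion is that $[H]$ lies in the image of the Cartan cohomology under $r$. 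I would accordingly phrase the statement at the level of classes, and remark that the form-level lift can be arranged on the nose precisely when the exact correction $H-H_0$ admits a $G$-invariant primitive.
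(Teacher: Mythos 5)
Your argument is the paper's own proof carried out at the cocycle level: the paper simply invokes the surjection $E_2^{*,0}\to E_\infty^{*,0}$ together with the identification $E_2^{*,0}\cong H^*(C_G(M),d+\iota)$, and your explicit representative of filtration $\geq N$ (necessarily concentrated in $C^0$, with $d_G$-closedness splitting into $\overline{d}$-invariance plus $(d+\iota)$-closedness) is exactly what that surjectivity amounts to. Your closing caveat is well taken and is not addressed in the paper: this argument only produces a closed Cartan cocycle whose form part is \emph{cohomologous} to $H$, and upgrading this to a lift of $H$ on the nose requires a $G$-invariant primitive of the discrepancy $H-H_0$, which averaging supplies for compact $G$ but which need not exist in general. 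Since the paper elsewhere (Lemma \ref{extension in Cartan implies equivariant extension} and the WZW application, where the lift $\tilde{\omega}=\omega-\lambda_a\Omega^a$ has form part exactly $\omega$) uses ``lift'' in the strict sense, your proposal to state the Proposition at the level of de Rham classes, with a remark on when the strict lift can be arranged, is the version the proof actually establishes.
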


 The second condition of Proposition \ref{conditions extension in Cartan} is more difficult to check than the first one, since it depends
 explicitly on the equivariant De Rham model for equivariant cohomology; for the first condition any model for the equivariant cohomology
 works.
 In certain specific situations, extensions in the Cartan model of closed forms may be obtained, and this is the subject of the next and final chapter. 
 
 We note here that a sequence of obstructions for lifting a $G$-invariant form to a closed form in the Cartan complex can 
 be determined, when studying the spectral sequence associated to appropriate filtrations of the Cartan complex, as it is carried out in
 \cite{Figueroa, Wu}. Our approach is different since we are interested in using the fact that an extension in the Cartan model can only
 exist if there is an equivariant extension, i.e. an extension in the homotopy quotient.

 \section{Equivariant extensions of the WZW term for $SL(n,\real)$ actions}

 In the physics literature (see \cite{Witten} and the references therein) it has been argued that the
 condition of anomaly cancelation for the gauged WZW action is given by the equation
 \begin{align*}
 {\rm{Tr}}(T_{a,L} T_{b,L}- T_{a,R} T_{b,R})=0
 \end{align*}
and this equation is moreover equivalent to the existence of an equivariant extension of the WZW term
 $$\omega = \frac{1}{12 \pi} {\rm{Tr}}(g^{-1} dg)^3$$
 on the Cartan model for equivariant cohomology. In the case that the group that we are gauging is compact, the anomaly cancellation is  equivalent to the existence of an equivariant lift of the cohomology class $[\omega]$, and therefore the anomaly cancellation becomes topological and could be checked with topological methods.

 In this section we study in detail the case on which the gauge group is $G=SL(n,\real)$ (or any subgroup of it)
 and we show that the anomaly cancelation condition for the WZW action is also topological and only depends on the existence
 of an equivariant lift of the cohomology class $[\omega]$; in this way we find a large family of $SL(n,\real)$ actions with anomaly cancellation.

 Let us start by recalling the explanation given by Witten \cite{Witten} that asserts that the
 condition for anomaly cancelation is equivalent to the existence of a lift in the Cartan complex of the form
$\omega$.

 \subsection{Gauged WZW actions} \label{section gauged WZW actions}

  Let $\Gamma$ be a connected and simple matrix group, i.e.  $\Gamma \subset GL(N,\real)$, such that its fundamental group
  is finite.
  Denote by $\Gamma_L \times \Gamma_R$ the product
of two copies of $\Gamma$ acting on $\Gamma$ on the left by the action
\begin{align*}
(\Gamma_L \times \Gamma_R) \times \Gamma & \to \Gamma \\
((g,h),k) & \mapsto gkh^{-1}
\end{align*}
and consider a subgroup $G \subset \Gamma_L \times \Gamma_R$  acting on the left on $\Gamma$ by the induced action of
$\Gamma_L \times \Gamma_R$.

The embedding $G \subset \Gamma_L \times \Gamma_R$ determines a map at the level of Lie algebras which can be written
as $$a \mapsto (T_{a,L}, T_{a,R}), \ \ a \in \gr{g}$$
and the canonical vector fields $X_a$ on $\Gamma$ generated by the $G$ action could be written as
$$(X_a)_g = T_{a,L} g-g T_{a,R}$$
  for all $g \in \Gamma$.

  The matrix 1-forms $g^{-1}dg$ and $dg  g^{-1}$ satisfy the equations
  \begin{align*}
  \iota_{X_a} (g^{-1}dg) & = g^{-1} T_{a,L} g - T_{a,R},\\
   \iota_{X_a}(dg g^{-1}) & =   T_{a,L} - g^{-1} T_{a,R} g,\\
  d(dg g^{-1})^{2p+1} &=  -(dg g^{-1})^{2p+2},\\
  d(g^{-1}dg)^{2p+1} &=  ( g^{-1}dg)^{2p+2}
  \end{align*}
and we can take the differential form
$$\omega = \frac{1}{12 \pi} {\rm{Tr}}(g^{-1} dg)^3,$$
which defines the WZW action. The form $\omega \in \Omega^3 \Gamma$ is $\Gamma_L \times \Gamma_R$ invariant, therefore
$\omega \in (\Omega^3 \Gamma)^G$,  it is closed and is a generator of the cohomology group $H^3(\Gamma)=\real$.

To find a closed extension of $\omega$ in the Cartan complex we need
to find 1-forms $\lambda_a$ such that the following equations are
satisfied:
\begin{align*}
\iota_{X_a} \omega - d \lambda_a=0, \\
\iota_{X_a} \lambda_b + \iota_{X_b} \lambda_a=0,\\
\LL_{X_b} \lambda_a = \lambda_{[b,a]},
\end{align*}
where the first two imply that the form $\tilde{\omega}=\omega - \lambda_a \Omega^a$ is $(d+\iota)$-closed, and the third one
implies that $\tilde{\omega}$ is $G$-invariant. Calculating $\iota_{X_a} \omega$ we obtain
\begin{align*}
\iota_{X_a} {\rm{Tr}}(g^{-1} dg)^3 &= 3 {\rm{Tr}}\left((g^{-1} T_{a,L} g - T_{a,R}) (g^{-1}dg)^2 \right)\\
& = 3{\rm{Tr}}\left( T_{a,L} (dg g^{-1})^2 - T_{a,R} (g^{-1}dg)^2 \right)\\
&= d \left[3{\rm{Tr}}\left( T_{a,L} (dg g^{-1}) +T_{a,R} (g^{-1}dg) \right) \right]
\end{align*}
 and therefore we see that we can define
 $$\lambda_a= \frac{1}{4 \pi} {\rm{Tr}}\left( T_{a,L} (dg g^{-1}) +T_{a,R} (g^{-1}dg) \right)$$
 satisfying the equation $\iota_{X_a} \omega - d \lambda_a=0$.
  The fact that $\LL_{X_b} \lambda_a = \lambda_{[b,a]}$ is satisfied, is a tedious but straightforward computation.

 Now we compute
 \begin{align*}
\iota_{X_a} \lambda_b + \iota_{X_b} \lambda_a &= \frac{1}{2 \pi} {\rm{Tr}} \left( T_{a,L}T_{b,L} - T_{a,R}T_{b,R} \right)
 \end{align*}
noting that for $a,b \in \gr{g}$ the function thus defined become constant, and therefore we have that
 $$\tilde{\omega}=\omega - \lambda_a \Omega^a \in (S \gr{g}^* \otimes \Omega^\bullet \Gamma)^G$$
 is a 3-form in the Cartan complex $C_G(\Gamma)$ and its failure to be closed is the quadratic form
 \begin{align*}
 (d + \iota) \tilde{\omega} \in (S^2 \gr{g}^*)^G,
 \end{align*}
 where the coefficient of $\Omega^a \Omega^b$ of this quadratic form is precisely
 $$-\frac{1}{2 \pi} {\rm{Tr}} \left( T_{a,L}T_{b,L} - T_{a,R}T_{b,R} \right).$$

Since it was known in the literature that the condition for the absence of anomalies was
$${\rm{Tr}} \left( T_{a,L}T_{b,L} - T_{a,R}T_{b,R} \right)=0,$$
Witten concluded that the absence of anomalies was equivalent to the existence of a closed extension of $\omega$ in the Cartan complex.

Whenever the Lie group $G$ is compact, the existence of such an extension is equivalent to the existence of an equivariant
extension of the cohomology class $[\omega]$, and this can be checked with the use of the Serre spectral sequence associated
to the fibration
$$\Gamma \to EG \times_G \Gamma \to BG.$$
The second page of the spectral sequence becomes
$$E_2^{p,q}= H^p(BG; H^q(\Gamma ;\real)) \cong H^p(BG;\real) \otimes H^q(\Gamma ;\real),$$
and since $H^1(\Gamma ;\real)=H^2(\Gamma ;\real)=0$, the only
non-trivial differential that affects $[\omega] \in E_2^{0,3}$ is
$d_4$ thus defining an element
$$d_4([\omega]) \in E_2^{4,0} \cong H^4(BG;\real);$$
this implies that the only obstruction to lift $[\omega]$ to an equivariant class is precisely $d_4([\omega])$.
Since we assumed that $G$ is compact, we know that
$$H^4(BG;\real) \cong (S^2 \gr{g}^*)^G$$
and therefore we must have that
$$d_4([\omega]) = (d+\iota)\tilde{\omega},$$
namely that the two obstructions are the same.

The previous argument permits to find several cases on which there is anomaly cancellation. The simplest of all is the adjoint
action of $G$ on itself $\Gamma=G$ since in this case the spectral sequence associated to the fibration $EG \times_G G_{ad} \to BG$
always collapses at the second page, and therefore $d_4=0$.

 \subsection{WZW actions with gauge group $G=SL(n,\real)$}
 In this section we will argue that if the gauge group is $G=SL(n,\real)$ then the cancellation of anomalies is topological, and therefore
 it is equivalent to  the existence of an equivariant lift of the cohomology class $[\omega]$.

 \begin{theorem} \label{equivalence of extension for SL(n,R)}
 Let $\Gamma$ be a connected, simple matrix group with finite fundamental group.
 Let $G=SL(n,\real)$ and consider an action on $\Gamma$ defined by an injection $G \subset \Gamma_L \times \Gamma_R$.
 The existence of an equivariant extension on $H^3(EG \times_G \Gamma; \real)$ of the cohomology class $[\omega]$ is equivalent
 to the existence of a closed lift to the Cartan model of $\omega$.
  \end{theorem}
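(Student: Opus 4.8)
The plan is to establish the two implications separately. The implication from a closed Cartan lift to an equivariant extension is immediate: by Lemma \ref{extension in Cartan implies equivariant extension} a closed lift $\overline{\omega}\in C_G^3(\Gamma)$ of $\omega$ produces the closed equivariant De Rham form $i(\overline{\omega})$, whose $\Omega^3$-component is $\omega$, and Getzler's ring isomorphism $H^*(G,S\gr{g}^*\otimes\Omega^\bullet\Gamma)\cong H^*(EG\times_G\Gamma;\real)$ then turns its class into an extension of $[\omega]$ in $H^3(EG\times_G\Gamma;\real)$. The content of the theorem is the converse, and I would prove it by isolating the obstruction to an equivariant extension and matching it with the algebraic obstruction produced in \S\ref{section gauged WZW actions}; this refines the general criterion of Proposition \ref{conditions extension in Cartan} in the case at hand.

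First I would locate the topological obstruction using the Serre spectral sequence of the fibration $\Gamma\to EG\times_G\Gamma\to BG$, with $E_2^{p,q}=H^p(BG;\real)\otimes H^q(\Gamma;\real)$. As $\Gamma$ is connected and simple, $H^1(\Gamma;\real)=H^2(\Gamma;\real)=0$ and $H^3(\Gamma;\real)=\real\langle[\omega]\rangle$; moreover $BG\simeq BSO(n)$ since $SO(n)$ is the maximal compact subgroup, so $H^3(BG;\real)=0$. Consequently $d_2$ and $d_3$ leaving $E^{0,3}$ land in groups involving $H^2(\Gamma;\real)$ or $H^1(\Gamma;\real)$ and vanish, and the target $E_4^{4,0}$ is unchanged from $E_2^{4,0}=H^4(BG;\real)$; hence the unique obstruction to extending $[\omega]$ is $d_4[\omega]\in H^4(BG;\real)$. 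Thus an equivariant extension exists if and only if $d_4[\omega]=0$.

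Next I would identify this obstruction with Witten's quadratic form. By \eqref{fourth cohomology of BSL(n,R)} together with $BG\simeq BSO(n)$ the target is one-dimensional, $H^4(BG;\real)\cong(S^2\gr{g}^*)^G\cong\real$, spanned by the invariant trace form; the point is that the isomorphism $H^4(BG;\real)\cong(S^2\gr{g}^*)^G$, which in the compact case is a formal consequence of the Cartan model computing equivariant cohomology, is here furnished instead by the explicit computation \eqref{fourth cohomology of BSL(n,R)} for $SL(n,\real)$. On the Cartan side the cochain $\tilde{\omega}=\omega-\lambda_a\Omega^a$ satisfies $(d+\iota)\tilde{\omega}=Q\in(S^2\gr{g}^*)^G$ with $Q_{ab}=-\frac{1}{2\pi}{\rm Tr}(T_{a,L}T_{b,L}-T_{a,R}T_{b,R})$. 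Both $d_4[\omega]$ and $Q$ are obstructions to the same lifting problem, namely lifting $[\omega]$ through the fibre restriction $r\colon H^3(EG\times_G\Gamma;\real)\to H^3(\Gamma;\real)$, and both live in the single group $H^4(BG;\real)\cong(S^2\gr{g}^*)^G$; exactly as in Witten's compact computation I would argue that they agree up to a nonzero scalar.

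Granting that $d_4[\omega]$ and $Q$ agree up to a nonzero scalar, the converse is immediate: an equivariant extension forces $d_4[\omega]=0$, hence $Q=0$, and then $\tilde{\omega}$ is itself a closed lift of $\omega$ in the Cartan model. I expect this comparison to be the main obstacle. For a non-compact group one cannot derive $H^4(BG;\real)\cong(S^2\gr{g}^*)^G$ from compactness, so the argument must instead rest on the isomorphism \eqref{fourth cohomology of BSL(n,R)} valid for $SL(n,\real)$, together with the fact that Witten's transgression computation of the obstruction takes place entirely in the de Rham and Cartan models---depending only on the Lie-algebra data $a\mapsto(T_{a,L},T_{a,R})$ and on $H^*(\Gamma;\real)$---and is therefore insensitive to compactness. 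The one-dimensionality of $(S^2\gr{g}^*)^G$ then leaves only a scalar ambiguity, so the remaining task is to verify that the model comparison does not kill the obstruction, i.e. that the scalar is nonzero (equivalently, that $Q$ is nonzero for at least one admissible embedding); this suffices for the equivalence $d_4[\omega]=0\iff Q=0$ and completes the proof.
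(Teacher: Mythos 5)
Your forward implication and your localization of the topological obstruction at $d_4[\omega]\in H^4(BG;\real)$ coincide with the paper's proof, and you correctly single out \eqref{fourth cohomology of BSL(n,R)} as the essential non-compact input. The gap is in the step you yourself flag as the main obstacle: the comparison of $d_4[\omega]$ with the quadratic form $Q=(d+\iota)\tilde{\omega}$. The paper does not argue by proportionality in a one-dimensional space; it uses the exact identity $d_4([\omega])=[(d+\iota)\tilde{\omega}]$ in $H^4(G,S\gr{g}^*)\cong H^4(BG;\real)$, where $[(d+\iota)\tilde{\omega}]$ denotes the class of the constant cocycle $Q\in(S^2\gr{g}^*)^G\subset C^0(G,S\gr{g}^*)$. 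This is the standard obstruction-cocycle statement: $i\tilde{\omega}$ is an equivariant De Rham cochain extending $\omega$ whose coboundary is pulled back from the point, so $[\omega]$ extends equivariantly precisely when $[Q]$ vanishes in $H^4(G,S\gr{g}^*)$. Granting that, \eqref{fourth cohomology of BSL(n,R)} --- which says exactly that the edge map $(S^2\gr{g}^*)^G\to H^4(G,S\gr{g}^*)$, $Q\mapsto[Q]$, is an isomorphism --- gives $[Q]=0$ if and only if $Q=0$, and the theorem follows.

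Your proposed substitute does not close this step. Any two elements of a one-dimensional vector space are proportional, so ``they agree up to a scalar'' has no content by itself; what is needed is that they vanish simultaneously, and that is precisely the assertion being proved. The proposed verification --- exhibit one admissible embedding with $Q\neq 0$ --- cannot deliver this, because the putative scalar is not a single constant ranging over all $\Gamma$ and all embeddings $G\subset\Gamma_L\times\Gamma_R$ (both $d_4[\omega]$ and $Q$ depend on the embedding), and asserting a uniform nonzero scalar is logically equivalent to the cocycle identity you are trying to avoid. Likewise, calling $Q$ ``an obstruction to lifting $[\omega]$ through the restriction to the fibre'' begs the question: before the comparison is made, one only knows the direction $Q=0\Rightarrow$ a lift exists, via Lemma \ref{extension in Cartan implies equivariant extension}. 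To repair the argument, replace the proportionality heuristic by the observation that $(d+\iota)\tilde{\omega}$ is the coboundary, inside the equivariant De Rham complex, of a cochain extending $\omega$ and lands in the subcomplex pulled back from the point, hence represents $d_4([\omega])$ on the nose; then invoke the injectivity of $(S^2\gr{g}^*)^G\to H^4(G,S\gr{g}^*)$ furnished by \eqref{fourth cohomology of BSL(n,R)}.
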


\begin{proof}
We know that if there is an extension $\tilde{\omega}$ in the Cartan model, then the cohomology class $[\tilde{\omega}]$ represents
the lift in the cohomology group $H^3(G, S \gr{g}^* \otimes \Omega^\bullet \Gamma)$. To prove the converse we will make use of the constructions and results of sections  \S \ref{section differentiable cohomology of SL(n,R)}, \S \ref{section extension differential form} and \S\ref{section gauged WZW actions}.

From the equivariant De Rham theorem we know that $H^*(BG;\real) \cong H(G, S \gr{g}^*)$ and therefore we could
take the class $d_4([\omega])$, which is the obstruction of extending $[\omega]$ to an equivariant class, to be an element in
$$d_4([\omega]) \in H^4(G, S \gr{g}^*).$$

We already know that $(d+ \iota) \tilde{\omega} \in (S^2 \gr{g}^*)^G$ and its cohomology class in $H^4(G, S \gr{g}^*)$ represents
the same obstruction for an equivariant lift, i.e.
$$d_4([\omega]) = [(d+ \iota) \tilde{\omega}].$$
In general it may happen that the cohomology class $ [(d+ \iota) \tilde{\omega}]$ is zero even though the form $(d+ \iota) \tilde{\omega}$
is different from zero. But in the particular case of $G=SL(n, \real)$ we have already seen in \eqref{fourth cohomology of BSL(n,R)}
that the inclusion map of the Cartan complex into the equivariant De Rham complex
$$(S\gr{g}^*)^G \to C^*(G, S\gr{g}^*)$$
induces an isomorphism in degree 4
$$(S^2\gr{g}^*)^G \stackrel{\cong}{\to} H^4(G,S\gr{g}^*), \ \ (d+\iota)\tilde{\omega} \mapsto [(d+\iota)\tilde{\omega}]$$
and therefore we have that the vanishing of the class $d_4([\omega])$ is equivalent to the vanishing of the quadratic form $(d+\iota)\tilde{\omega}$, i.e.
$$d_4([\omega])=0 \ \ \mbox{if and only if} \ \ (d+\iota)\tilde{\omega}=0.$$
\end{proof}

We see that for the case on which the gauge group is $SL(n,\real)$, the equations of cancellation of anomalies, namely that
for all $a,b \in \gr{sl}(n,\real)$
$${\rm{Tr}} \left( T_{a,L}T_{b,L} - T_{a,R}T_{b,R} \right)=0,$$
are equivalent to the existence of an equivariant extension on $H^3(EG \times_G \Gamma; \real)$ of the cohomology class
$[\omega]$. Now we are ready to give examples on both the existence and the non existence of equivariant extensions of $\omega$.

\subsection{Examples}
\subsubsection{Adjoint action}

Let $G=\Gamma$ and consider the diagonal injection
$$G  \subset \Gamma_L \times \Gamma_R, \ \ g \mapsto (g,g)$$
which induces the adjoint action of $G$ on $\Gamma=G^{\rm{ad}}$. In this case the cohomology of the homotopy quotient  $EG \times_G G^{\rm{ad}}$ is isomorphic to the cohomology of $G$ tensor the cohomology of $BG$:
$$H^*(EG \times_G G^{ad}; \real)\cong   H^*(BG;\real) \otimes H^*(G;\real).$$
This isomorphism can be obtained from the Serre spectral sequence associated to the fibration
$$G \to EG \times_G G^{\rm{ad}} \to BG,$$
which collapses at level 2 because the classes in $H^*(G;\real)$
can be lifted to classes to $H^*(EG \times_G G^{ad}; \real)$: take
a primitive class in $H^*(BG;\real)$ (namely a class in
$H^*(BG;\real)$ which is in the image of a primitive element in
$H^*(G)$ of one of the differentials of the Serre Spectral
Sequence associated to the fibration $G \to EG \to BG$) pull it
back to $S^1 \times \LL BG$ via the evaluation map where $\LL BG$
is the space of free loops of $BG$, then integrate over $S^1$ and
get a class in $H^*(\LL BG;\real)$ of degree 1 less; this class in
$H^*(\LL BG;\real)$ once restricted to the based loops $\Omega BG
\simeq G$ is precisely the class in $H^*(G)$ that defined the
primitive class in $H^*(BG)$ that we started with; finally recall
that $\LL BG \simeq EG \times_G G^{ad}$.

If we take $G=\Gamma=SL(n,\real)$, we know by Theorem
\ref{equivalence of extension for SL(n,R)} that the existence of an
equivariant extension of $[\omega]$ is equivalent to the cancelation
of anomalies for this gauged action, and since the class $[\omega]$
can be extended to an equivariant one, we conclude that in this case
there is an anomaly cancelation.

In particular, for any subgroup $F \subset SL(n,\real)$ acting by
the adjoint action on $SL(n,\real)$ there is also cancelation of
anomalies.

\subsubsection{$G=SL(n,\real) \subset \Gamma_L$ for $n>2$ }

Whenever the action of $G=SL(n,\real)$ on $\Gamma$ is obtained by a left action induced by an inclusion $G=SL(n,\real) \subset \Gamma_L$, we have that the $G$ action on $\Gamma$ is free and therefore the homotopy quotient $EG \times_G \Gamma$ and the quotient $G \backslash \Gamma$ are homotopy equivalent. For $n>2$ the Serre spectral sequence tells us that $$d_4([\omega])=c_2 \in H^4(BSL(n,\real);\real)$$
and therefore the class $[\omega]$ does not extend to an equivariant one. By Theorem \ref{equivalence of extension for SL(n,R)} we know that this implies that there is no cancellation of anomalies. We conclude that for left free actions of the group $SL(n,\real)$ for $n>2$  there must exist $a,b \in \gr{sl}(n,\real)$ such that
$${\rm{Tr}} \left( T_{a,L}T_{b,L} - T_{a,R}T_{b,R} \right)\neq 0.$$

\subsubsection{$G=SL(2,\real) \subset \Gamma_L$ }

Following the same argument as before, we have that
$d_4([\omega])=0$ since $H^4(BSL(2,\real),\real)=0$. Therefore for
left free actions of the group $SL(2,\real)$ there is anomaly
cancelation.

\def\cprime{$'$}


\end{document}